\newtheorem{theorem}{Theorem}
\newtheorem{lemma}[theorem]{Lemma}
\newtheorem{remark}[theorem]{Remark}
\newcommand{\diag}{\operatorname{diag}}
\newcommand{\cardS}{{\vert S\vert}}
\author{Piotr Mizerka}
\author{Piotr W. Nowak}
\address{Institute of Mathematics of the Polish Academy of Sciences, \'{S}niadeckich 8, 00-656, Warsaw, Poland}
\email{pnowak@impan.pl}
\email{pmizerka@impan.pl}
\begin{document}

\title{On order units in the augmentation ideal}

\begin{abstract}

We study order units in the real group ring and the augmentation ideal, as well as in matrix algebras. 
We identify an infinite family of order units in the powers of the augmentation
ideal, that includes the Laplacian, and show that these order units are naturally obtained via cohomological operations from 
more simpler diagonal order units in matrix algebras.
\end{abstract}
\maketitle

Algebraic methods have become useful in recent years in proving the existence of spectral gaps for 
group Laplacians, more generally cohomological Laplacians, and some operators related to them. This started with 
Ozawa's characterization of Kazhdan's property (T) in terms of sums of squares \cite{ozawa1}, which then 
led to proofs of this property for some classes of groups, including $\operatorname{Aut}(F_n)$, the automorphism
group of the free group on $n$ generators, for $n\ge 5$ \cite{kno, kkn}. It has also allowed to provide new characterizations of vanishing of higher cohomology with coefficients in unitary representations \cite{bader-nowak}. Recently, Ozawa  \cite{ozawa2} also interpreted Shalom's property $H_T$ in this setting.

In this work we are interested in the algebraic structure of the augmentation ideal and the order induced by the cone of sums of hermitian squares. We suppose that all groups occuring here are finitely generated. Ozawa \cite{ozawa1} showed that the group Laplacian $\Delta$ is an order unit in $I[G]$, the augmentation ideal of the group $G$. This fact was crucial for establishing a characterization of property $(T)$ in terms of an algebraic spectral gap for the Laplacian in the group ring $\mathbb{R}G$. In \cite{ozawa2} he also distinguished another element,  $\square$ (see definition below|), and showed in particular that it is an order unit in $I[G]$ for groups with finite abelianization. 

Here we put both the Laplacian $\Delta$ and the element $\square$ in a single framework. 
We define a family of elements $\square_n\in I[G]$ and show that they are order units in $I[G]$ for groups $G$ with finite abelianization. 
There are two essential ingredients in the proof of our main result, \Cref{theorem:main}. The first one is the property that, for any order unit $u$ in $I^{2n}[G]$, the diagonal matrix $\text{diag}(u)$ with $u$ in each entry of the diagonal, is an order unit in matrices over $I^{2n}[G]$. 
The second is the fact that for a group $G$ with finite abelianization generated by  a finite set $S$, a certain positive map $D:\mathbb{M}_{\cardS\times \cardS}(I[G])\to I[G]$ is surjective. 

An important advantage of this approach is that it provides a deeper explanation of why the Laplacian and the element $\square$ are both order units 
in the appropriate powers of the augmentation ideal, as they are constructed as images of natural order units in a matrix algebra under a positive map. 

\subsection*{Acknowledgements} 
We are grateful to Marek Kaluba for helpful comments.

Both authors were supported by the Maestro 13 National Science Center (Poland) grant "Analysis on Groups" 2021/42/A/ST1/00306.

\section{An infinite family of order units}\label{section:theorem_statement}
For any group $G$, one can define the \emph{real group ring} $\mathbb{R}G$ as the ring consisting of finitely supported functions $G\rightarrow\mathbb{R}$ with pointwise addition and convolution multiplication. It is convenient to write each element $\xi\in\mathbb{R}G$ as $\xi=\sum_g\xi(g)g$, indicating that $g\mapsto \xi(g)$ is the function $\xi$ defines. The map $G\rightarrow G$, $g\mapsto g^{-1}$ defines then the involution $*$ on $\mathbb{R}G$. We also consider the \emph{augmentation ideal} $I[G]$ which is the kernel of the augmentation map $\omega:\mathbb{R}G\rightarrow\mathbb{R}$. The $n$-th \emph{augmentation power} $I^n[G]\subseteq \mathbb{R}G$ is the span of $n$-fold products of elements of the
augmentation ideal $I[G]$. Since $I[G]$ is spanned by elements of the form $1-g$ for all $g\in G$, it is easy to see that
$I^n[G]$ is spanned by elements of the form $\prod_{i=1}^n (1-g_i)$, where $g_i\in G$. 

For any $k\geq 1$, the $*$-involution structure of $\mathbb{R}G$ (resp. $I[G]$) endows the matrices $\mathbb{M}_{k\times k}(\mathbb{R}G)$ (resp. $\mathbb{M}_{k\times k}(I[G])$) with the $*$-algebra structure, the $*$-involution being the composition of the $*$-involution on $\mathbb{R}G$ (resp. $I[G]$) and matrix transposition. 

Let $\mathcal{A}$ be a *-algebra. The \emph{positive cone} of \emph{hermitian squares} $\Sigma^2\mathcal{A}$ is the 
set of finite sums of the form $\sum_{i=1}^l a_i^*a_i$, where $a_i\in \mathcal{A}$. 
Let $V\subseteq \mathcal{A}$ be a subspace. An element $u\in V$ is an \emph{order unit} in $V$ if for every 
$v=v^*\in V$ there exists $R_v\ge 0$ such that $v+R_vu\in V\cap \Sigma^2\mathcal{A}$. The algebras
we consider here are of the form $\mathbb{M}_{k\times k}(\mathbb{R}G)$, and they have the property that 
the identity matrix is always an order unit in $\mathcal{A}$.

Let $G$ be a group with a finite generating set $S$. Then $d=[1-s]_{s\in S} \in \mathbb{M}_{\vert S\vert\times 1}(I[G])$ is the matrix of the $0$-codifferential map $\mathbb{R}G\to \oplus_{s\in S}I[G]$. The map $D:\mathbb{M}_{\cardS\times \cardS}(I[G])\to I[G]$ is defined by 
$$
D(\xi)=d^*\xi d.
$$

\subsection*{The family $\square_n$}
We will now introduce the family of elements of the agumentation ideal that will be the main focus of this article. 
For each $k\geq 1$, denote by $\text{diag}_k(\xi)$ the diagonal $k\times k$ matrix with each diagonal entry equal to $\xi\in\mathbb{R}G$ and 
we put $$\text{diag}(\square_{n-1})=\text{diag}_{|S|}(\square_{n-1}).$$ 
Define $$\square_0=1,$$ 
and for any $n\geq 1$ let
$$
\square_n=D\left(\text{diag}(\square_{n-1})\right)=\sum_{s_1,\ldots,s_n\in S}(1-s_n)^*\ldots(1-s_1)^*(1-s_1)\ldots(1-s_n).
$$
Note that 
$$\square_1=\Delta \text{\ \ \ \ and \ \ \ \ }\square_2=\square,$$ 
where $\square$ is as defined in \cite{ozawa2}, up to a normalizing constant (actually, $\square_2=4\square$; since scalar multiplication does not change the positivity type, we can assume $\square_2=\square$). 

We will prove the following 

\begin{theorem}\label{theorem:main}
	Suppose $G$ has finite abelianization. Then, for each $n\geq 1$, $\square_n$ is an order unit in $I[G]$ and $\text{diag}(\square_n)$ is an order unit in $\mathbb{M}_{|S|\times |S|}(I[G])$.
\end{theorem}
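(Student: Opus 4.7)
The plan is to proceed by induction on $n$, using as tools the two ingredients isolated in the introduction: (i) the matrix amplification principle, stating that if $u$ is an order unit in the relevant augmentation ideal then $\text{diag}(u)$ is an order unit in the corresponding matrix algebra, and (ii) the surjectivity of $D:\mathbb{M}_{|S|\times|S|}(I[G])\to I[G]$, which uses the hypothesis that $G$ has finite abelianization. A third, simpler, ingredient is the complete positivity of $D$: since $D(\xi)=d^*\xi d$, any sum of hermitian squares $\sum_i a_i^*a_i$ is sent to $\sum_i(a_id)^*(a_id)$, again a sum of hermitian squares.

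For the base case $n=1$ we have $\square_1=\Delta$, which is an order unit in $I[G]$ by Ozawa \cite{ozawa1}; (i) then promotes this to $\text{diag}(\Delta)$ being an order unit in $\mathbb{M}_{|S|\times|S|}(I[G])$. For the inductive step, assume that $\square_{n-1}$ and $\text{diag}(\square_{n-1})$ are order units in $I[G]$ and $\mathbb{M}_{|S|\times|S|}(I[G])$ respectively, and fix a self-adjoint $v\in I[G]$. By (ii) there exists $\eta\in\mathbb{M}_{|S|\times|S|}(I[G])$ with $D(\eta)=v$, and since $D(\eta^*)=(d^*\eta d)^*=v^*=v$, replacing $\eta$ by $(\eta+\eta^*)/2$ I may assume $\eta=\eta^*$. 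The inductive hypothesis then produces $R\ge 0$ with
$$
\eta+R\,\text{diag}(\square_{n-1})=\sum_{i}a_i^*a_i
$$
for some $a_i\in\mathbb{M}_{|S|\times|S|}(\mathbb{R}G)$, and applying $D$ gives
$$
v+R\,\square_n=D\bigl(\eta+R\,\text{diag}(\square_{n-1})\bigr)=\sum_{i}(a_id)^*(a_id)\in\Sigma^2\mathbb{R}G,
$$
which automatically lies in $I[G]$. Hence $\square_n$ is an order unit in $I[G]$, and one final application of (i) yields that $\text{diag}(\square_n)$ is an order unit in $\mathbb{M}_{|S|\times|S|}(I[G])$, closing the induction.

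The inductive packaging above is formal once the two ingredients are in hand; the substantive work therefore lies in establishing (i) and (ii). I expect (ii), the surjectivity of $D$, to be the main obstacle: this is precisely where the finite-abelianization hypothesis must play a decisive role, and the challenge is to show that every element of $I[G]$, and in particular every $1-g$, can be written in the form $d^*\eta d$ for some matrix $\eta$ over $I[G]$. I expect this to follow from using $H_1(G;\mathbb{R})=0$ to express each $g$ via commutators and torsion relations whose associated coboundaries can be assembled into such an $\eta$. By contrast, ingredient (i) should reduce to a standard $2\times 2$-block positivity trick, in which the off-diagonal entries of a self-adjoint matrix are written as differences of hermitian squares and absorbed into a sufficiently large multiple of $\text{diag}(u)$.
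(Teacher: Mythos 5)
Your proposal is correct and follows essentially the same route as the paper: the same induction, with the base case from Ozawa's result that $\Delta$ is an order unit, the inductive step via surjectivity of $D$, symmetrization of the preimage, absorption by $\text{diag}(\square_{n-1})$, and the observation that conjugation by $d$ preserves sums of hermitian squares. Your sketches of the two ingredients also match the paper's: (i) is proved by exactly the $2\times 2$ block trick you describe, and (ii) is derived from the finite abelianization hypothesis via the idempotence $I[G]=I^2[G]$ (equivalently $H_1(G;\mathbb{R})=0$), so that elements of $I[G]=I^3[G]$ can be rewritten in the form $\sum_{s,t}(1-s)^*\xi_{s,t}(1-t)=d^*\xi d$.
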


\section{Diagonal order units}\label{section:diagonal_ous}
Let $G$ be a group with a finite symmetric generating set $S$. Suppose that $n\geq 1$ and $u$ is an order unit in $I^{2n}[G]$. Let $k\geq 1$. We will 
prove that $\text{diag}_k(u)$ is an order unit in $\mathbb{M}_{k\times k}(I^{2n}[G])$.

For $g\in G$ and $s_1,\ldots,s_n,t_1,\ldots,t_n\in S$, denote by $\alpha_u$ the product $(1-u_1)\ldots(1-u_n)$, for $u=s,t$. Define
\begin{align*}
	E_{s,t}(\pm g)=\begin{gmatrix}[b]
		0&\pm\alpha_s g\alpha_t\\
		\pm\alpha_t^*g^{-1}\alpha_s^*&0
	\end{gmatrix}
\end{align*}
and 
\begin{align*}
	\square_{s,t}=\begin{gmatrix}[b]
		\alpha_s\alpha_s^*&0\\
		0&\alpha_t^*\alpha_t
	\end{gmatrix}.
\end{align*}
The following lemma is a crucial part in proving the general case:
\begin{lemma}\label{lemma:diag_square_order_unit_2_2}
	The two matrices $E_{s,t}(\pm g)+\square_{s,t}$ are sums of hermitian squares.
\end{lemma}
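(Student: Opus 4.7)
The plan is to exhibit each matrix $E_{s,t}(\pm g)+\square_{s,t}$ as a single hermitian square $X^*X$, with $X\in \mathbb{M}_{2\times 2}(\mathbb{R}G)$ chosen so that the diagonal entries of $\square_{s,t}$ emerge from the same factors of $X$ as the off-diagonal entries of $E_{s,t}(\pm g)$, with the $g^{-1}g=1$ cancellation forcing the $(2,2)$ block to be $\alpha_t^*\alpha_t$.

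Concretely, I would consider
\begin{align*}
X=\begin{gmatrix}[b] \alpha_s^* & \pm g\alpha_t \\ 0 & 0 \end{gmatrix}.
\end{align*}
Recall that the involution on $\mathbb{M}_{2\times 2}(\mathbb{R}G)$ is conjugate transpose with entrywise involution $g\mapsto g^{-1}$, so $X^*$ has first column $(\alpha_s,\pm\alpha_t^* g^{-1})^T$ and zero second column. A direct $2\times 2$ matrix product then gives, using $(\pm)\cdot(\pm)=+1$ and $g^{-1}g=e$,
\begin{align*}
X^*X=\begin{gmatrix}[b] \alpha_s\alpha_s^* & \pm\alpha_s g\alpha_t \\ \pm\alpha_t^*g^{-1}\alpha_s^* & \alpha_t^*\alpha_t \end{gmatrix}
= E_{s,t}(\pm g)+\square_{s,t}.
\end{align*}
Since $X^*X\in\Sigma^2 \mathbb{M}_{2\times 2}(\mathbb{R}G)$ by definition, this proves the lemma.

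There is essentially no obstacle: the content of the lemma is exactly the observation that the diagonal block $\square_{s,t}$ is the ``price'' one must pay to complete the off-diagonal terms $\pm\alpha_s g\alpha_t$ into an honest square, with the cancellation $g^{-1}g=1$ being what makes the bottom-right entry of $X^*X$ land precisely at $\alpha_t^*\alpha_t$ rather than something larger. The only thing worth being careful about is the sign bookkeeping, which works uniformly for both choices $\pm$ because the two off-diagonal sign factors multiply to $+1$ in every diagonal entry of $X^*X$.
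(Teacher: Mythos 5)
Your proof is correct and is essentially the paper's argument: both exhibit $E_{s,t}(\pm g)+\square_{s,t}$ as a single rank-one hermitian square, the only cosmetic difference being that you attach $g$ to the $\alpha_t$ factor while the paper attaches it to the $\alpha_s$ factor (the two factorizations differ by right multiplication by $\pm g$, so they yield the same product).
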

\begin{proof}
	This follows from the decomposition below:
\begin{align*}
		E_{s,t}(\pm g)+\square_{s,t}=\begin{gmatrix}[b]
			\pm\alpha_sg\\
			\alpha_t^*
		\end{gmatrix}\begin{gmatrix}[b]
			\pm g^{-1}\alpha_s^*&\alpha_t
		\end{gmatrix}.
	\end{align*}
\end{proof}
\noindent Applying the lemma above, we get the general statement:
\begin{lemma}\label{lemma:diag_ou_general}
	The matrix $\text{diag}_k(u)$ is an order unit in $\mathbb{M}_{k\times k}(I^{2n}[G])$.
\end{lemma}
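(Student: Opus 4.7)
My plan is to reduce the matrix statement to the scalar order-unit property for $u$, by peeling off the off-diagonal part of an arbitrary hermitian matrix in $\mathbb{M}_{k\times k}(I^{2n}[G])$ via \Cref{lemma:diag_square_order_unit_2_2} and absorbing the residual diagonal corrections into $\text{diag}_k(u)$.

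Given $V=V^*\in \mathbb{M}_{k\times k}(I^{2n}[G])$, I would write $V=\sum_i V_{ii}E_{ii}+\sum_{i<j}X^{(i,j)}(V_{ij})$, where $E_{ii}$ denotes the standard matrix unit and $X^{(i,j)}(\xi)$ is the matrix with $\xi$ in position $(i,j)$, $\xi^*$ in $(j,i)$, and zeros elsewhere. For each pair $i<j$ I would use $I^{2n}[G]=I^n[G]\cdot I^n[G]$ to expand $V_{ij}=\sum_\ell \sigma_\ell\alpha_\ell\beta_\ell$ with $\alpha_\ell,\beta_\ell\in I^n[G]$ and $\sigma_\ell\in\{\pm 1\}$, absorbing the magnitudes of the real coefficients into $\alpha_\ell$ and $\beta_\ell$. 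The factorization in the proof of \Cref{lemma:diag_square_order_unit_2_2} is purely symbolic and, specialized to $g=1$, gives
$$\begin{pmatrix}\alpha\alpha^*&\sigma\alpha\beta\\ \sigma\beta^*\alpha^*&\beta^*\beta\end{pmatrix}=\begin{pmatrix}\sigma\alpha\\ \beta^*\end{pmatrix}\begin{pmatrix}\sigma\alpha^*&\beta\end{pmatrix}$$
for any $\alpha,\beta$ in the ambient algebra and any sign $\sigma$; in particular it is valid for arbitrary $\alpha,\beta\in I^n[G]$. Embedding each such $2\times 2$ hermitian square into rows/columns $i,j$ of a $k\times k$ matrix and summing over $\ell$ produces a diagonal matrix $D_{i,j}\in \mathbb{M}_{k\times k}(I^{2n}[G])$ supported on positions $(i,i)$ and $(j,j)$ with $X^{(i,j)}(V_{ij})+D_{i,j}\in \Sigma^2\mathbb{M}_{k\times k}(\mathbb{R}G)$.

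Substituting back, $V=V'+P$, where $P:=\sum_{i<j}\bigl(X^{(i,j)}(V_{ij})+D_{i,j}\bigr)$ is a sum of hermitian squares and $V':=\sum_i V_{ii}E_{ii}-\sum_{i<j}D_{i,j}$ is diagonal and hermitian with entries $W_{ii}\in I^{2n}[G]$. Since $u$ is a scalar order unit in $I^{2n}[G]$, for each $i$ there is $R_i\ge 0$ with $W_{ii}+R_i u\in \Sigma^2\mathbb{R}G\cap I^{2n}[G]$; applying the defining property to $v=u$ also forces $u\in \Sigma^2\mathbb{R}G$, so taking $R:=\max_i R_i$ gives $W_{ii}+Ru\in \Sigma^2\mathbb{R}G\cap I^{2n}[G]$ for every $i$. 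The observation that $\xi=\sum_\ell a_\ell^*a_\ell\in \Sigma^2\mathbb{R}G$ implies $\xi E_{ii}=\sum_\ell (a_\ell E_{1i})^*(a_\ell E_{1i})\in \Sigma^2\mathbb{M}_{k\times k}(\mathbb{R}G)$ then gives $V'+R\,\text{diag}_k(u)\in \Sigma^2\mathbb{M}_{k\times k}(\mathbb{R}G)\cap \mathbb{M}_{k\times k}(I^{2n}[G])$, and therefore $V+R\,\text{diag}_k(u)=V'+R\,\text{diag}_k(u)+P$ is a sum of hermitian squares in $\mathbb{M}_{k\times k}(I^{2n}[G])$, as required.

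The one substantive point in this argument is recognizing that the factorization in \Cref{lemma:diag_square_order_unit_2_2} depends only on the $*$-algebra structure and not on any particular feature of the $\alpha_s$, so it can be invoked for an arbitrary expression of an element of $I^{2n}[G]$ as a sum of products of pairs from $I^n[G]$. Everything else is bookkeeping.
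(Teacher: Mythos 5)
Your proof is correct and rests on the same key mechanism as the paper's: split a hermitian matrix into diagonal and off-diagonal parts, kill each off-diagonal entry with the rank-one factorization underlying \Cref{lemma:diag_square_order_unit_2_2}, and absorb the resulting diagonal corrections into $\text{diag}_k(u)$. The execution differs in two worthwhile ways. Where the paper (for $k=2$) expresses the off-diagonal part as a positive combination of the specific matrices $E_{s,t}(\pm g)$, using that $I^n[G]$ is generated by the $\alpha_s$ as a one-sided $\mathbb{R}G$-module, so that the diagonal corrections $\square_{s,t}$ assemble via the symmetry $S=S^{-1}$ into a multiple of $\text{diag}_k(\square_n)$, which is then dominated by $\lambda' u$, you instead factor each entry arbitrarily through $I^{2n}[G]=I^n[G]\cdot I^n[G]$ and dominate the resulting diagonal corrections $\sum_\ell\alpha_\ell\alpha_\ell^*$, $\sum_\ell\beta_\ell^*\beta_\ell$ directly by $u$, since they are hermitian elements of $I^{2n}[G]$; this removes both the detour through $\square_n$ and the reliance on $S$ being symmetric. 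You also carry out the general $k$ case explicitly via the embeddings $X^{(i,j)}$, whereas the paper treats only $k=2$ and defers the general case to \cite[Proposition 3.2]{kaluba-mizerka-nowak}. The one point to watch is the passage from the individual constants $R_i$ to $R=\max_i R_i$, which requires $(R-R_i)u\in\Sigma^2\mathbb{R}G$; your justification by applying the order-unit property to $v=u$ tacitly assumes $u=u^*$, which holds in every application here (and the paper's own phrasing makes the same tacit step), so this is a presentational remark rather than a gap.
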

\begin{proof}
	The case $k=1$ is obvious, since $u$ is an order unit in $I^{2n}[G]$. Let $k=2$ and suppose $M\in\mathbb{M}_{k\times k}(I^{2n}[G])$. Suppose first that $M$ consits of the diagonal part only. Since $u$ is an order unit in $I^{2n}[G]$ by our assumption, it follows that we can add a sufficient amount of $\diag_k(u)$ to $M$ to make it a sum of hermitian squares. It suffices therefore to prove the assertion for the non-diagonal part and we can suppose that the diagonal part of $M$ vanishes. Since $I[G]$ is generated by $1-s$, $s\in S$ as a left, as well as a right $\mathbb{R}G$-module, it follows that $I^{n}[G]$ is generated by the elements $\alpha_s$ as a left (right) $\mathbb{R}G$-module. Thus, $M$ can be expressed as a finite linear combination of matrices $E_{s,t}(\pm g)$ with positive coefficients:
	\begin{align*}
	M=\sum_{s,t,\pm g}\lambda_{s,t,\pm g}E_{s,t}(\pm g),\quad \lambda_{s,t,\pm g}>0.
	\end{align*}
It follows by \Cref{lemma:diag_square_order_unit_2_2} that $M'=M+\sum_{s,t,\pm g}\lambda_{s,t,\pm g}\square_{s,t}$ is a sum of hermitian squares. Since $S=S^{-1}$, we can add to $M'$ appropriate diagonal entries of the form $\lambda_s\alpha_s^*\alpha_s$ or $\lambda_s\alpha_s\alpha_s^*$, $\lambda_s>0$, to get $M''=M+\lambda\text{diag}_k(\square_n)$ for some $\lambda\geq 0$. Obviously, $M''$ is a sum of hermitian squares. On the other hand, since $u$ is an order unit in $I^{2n}[G]$, it follows that for a suitable $\lambda'>0$ the diference $\lambda'u-\square_n$ is a sum of hermitian squares. Thus, 
$$
M+\lambda\lambda'\text{diag}_k(u)=M''+\lambda\text{diag}_k(\lambda'u-\square_n)
$$
is a sum of hermitian squares.

The proof for the general case is just a repetition of the arguments of the general case proof of \cite[Proposition 3.2]{kaluba-mizerka-nowak}.
\end{proof}

\section{Surjectivity of $D$}\label{section:D_surjectivity}
We will now examine the map $D$ and prove that for groups $G$ with finite abelianization it is surjective. 

For a real vector space $V$ denote by $V'$ its dual, the space of all linear functionals on $V$.
For the real group ring $\mathbb{R}G$ the dual space $\mathbb{R}G'$ can be identified with  the space of all functions $\{ f:G\to \mathbb{R}\}$.
Denote the translation actions of $G$ on $\varphi\in \mathbb{R}G'$ by $g\cdot \varphi\cdot h(\eta)=\varphi(g^{-1}\cdot\eta\cdot h^{-1})$. This structure 
can be extended linearly to 
a bimodule structure on $\mathbb{R}G'$ over $\mathbb{R}G$.

As the augmentation ideal is a subspace $I[G]\subseteq \mathbb{R}G$, the dual $I[G]'$ is a quotient of $\mathbb{R}G'$. More precisely, 
the dual of the augmentation map, $\omega':\mathbb{R}\to \mathbb{R}G'$ is the inclusion of $\mathbb{R}$ as constant functions and $I[G]' = \mathbb{R}G'/\operatorname{const}$. We will thus view an element of $I[G]'$ as the equivalence class of functions on $G$ that differ by constant functions.

\begin{lemma}\label{theorem: D surjective}
Let $G$ be a finitely generated group generated by a finite set $S$. 
The following conditions are equivalent:
\begin{enumerate}
\item The augmentation ideal $I[G]$ is idempotent, \label{item: main theorem augmentation powers stable}
\item The map $\iota_*: H^1(G,M) \to H^1(G,N)$ is injective for any inclusion $\iota:M\hookrightarrow N$ of the trivial $G$-module $M$ into an $\mathbb{R}G$-module $N$,\label{item: main theorem injectivity in cohomology}
\item The map $D:\mathbb{M}_{\cardS\times \cardS}(I[G])\to I[G]$ is surjective. \label{item: main theorem D surjective}
\end{enumerate}
\end{lemma}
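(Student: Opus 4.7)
The plan is to prove the equivalences $(1) \Leftrightarrow (3)$ and $(1) \Leftrightarrow (2)$ separately; both reduce to fairly standard identifications once unwound.

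For $(1) \Leftrightarrow (3)$ I would first identify the image of $D$ exactly. Writing $(1-s)^* = 1 - s^{-1}$, the formula $D(\xi) = \sum_{s,t\in S} (1-s^{-1})\,\xi_{s,t}\,(1-t)$ immediately shows $\operatorname{im}(D) \subseteq I^3[G]$. For the reverse inclusion, since $S$ generates $G$ so does $S^{-1}$, hence both $\{1-s^{-1} : s \in S\}$ and $\{1-t : t \in S\}$ generate $I[G]$ as left (resp.\ right) $\mathbb{R}G$-modules. Given $abc$ with $a,b,c \in I[G]$, I would write $a = \sum_s (1-s^{-1})a_s$ with $a_s \in \mathbb{R}G$, then decompose each $a_s b c \in I^2[G] = \sum_t I[G](1-t)$ as $\sum_t e_{s,t}(1-t)$ with $e_{s,t} \in I[G]$; this realises $abc$ as $D$ applied to the matrix $(e_{s,t})$, giving $\operatorname{im}(D) = I^3[G]$. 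Since $I^3[G] \subseteq I^2[G] \subseteq I[G]$, surjectivity of $D$ is equivalent to $I^3[G] = I[G]$, which is in turn equivalent to $I^2[G] = I[G]$, i.e., to (1).

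For $(1) \Leftrightarrow (2)$ the idea is to reduce (2) to the vanishing of $H^1(G,M)$ for every trivial real $\mathbb{R}G$-module $M$. Applying $\operatorname{Hom}_{\mathbb{R}G}(-,M)$ to the augmentation sequence $0 \to I[G] \to \mathbb{R}G \to \mathbb{R} \to 0$, using that $\mathbb{R}G$ is free and that for trivial $M$ the restriction of any $\mathbb{R}G$-homomorphism $\mathbb{R}G \to M$ to $I[G]$ vanishes, identifies $H^1(G,M) \cong \operatorname{Hom}_{\mathbb{R}G}(I[G],M) \cong \operatorname{Hom}_{\mathbb{R}}(I[G]/I^2[G],M)$. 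Since $I[G]/I^2[G] \cong G_{\mathrm{ab}} \otimes_{\mathbb{Z}} \mathbb{R}$, condition (1) is equivalent to the vanishing of $H^1(G,M)$ for every trivial $M$, which gives (2) for free. For the converse, given any trivial $M$ I would embed it into $N := \operatorname{Hom}_{\mathbb{R}}(\mathbb{R}G,M)$ via constant functions; this $N$ is the coinduction of $M$ from the trivial subgroup, so Shapiro's lemma gives $H^1(G,N) = 0$, and injectivity of $\iota_*$ for this single $\iota$ then forces $H^1(G,M) = 0$.

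The one step that requires some care is the identification $\operatorname{im}(D) = I^3[G]$: one must arrange the rewriting so that after extracting outer generators $(1-s^{-1})$ on the left and $(1-t)$ on the right, the middle slot lands in $I[G]$ rather than merely in $\mathbb{R}G$. The cohomological parts---the identification of $H^1(G,M)$ for trivial coefficients and Shapiro's lemma---are standard, so the whole argument is really a dictionary translating idempotence of the augmentation ideal into either a module-theoretic or a cohomological statement.
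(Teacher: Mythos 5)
Your proposal is correct, but it follows a genuinely different route from the paper. The paper proves the cycle \eqref{item: main theorem augmentation powers stable}~$\Rightarrow$~\eqref{item: main theorem injectivity in cohomology}~$\Rightarrow$~\eqref{item: main theorem D surjective}~$\Rightarrow$~\eqref{item: main theorem augmentation powers stable}: the first implication is an explicit cocycle computation using $1-g=\sum\alpha_i(1-g_i)(1-h_i)$; the second passes to the dual map $D'\colon \mathbb{R}G'\to \mathbb{M}_{\cardS\times\cardS}(I[G])'$, shows that any $\varphi$ in its kernel gives rise to a coboundary $g\mapsto \varphi_{\mathrm{aug}}-g\cdot\varphi_{\mathrm{aug}}$ valued in the constants, and applies \eqref{item: main theorem injectivity in cohomology} to the inclusion $\mathbb{R}\subseteq\mathbb{R}G'$ to conclude that $D'$ is injective on $I[G]'$; the last implication is the same trivial observation you make. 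You instead split the lemma into two independent equivalences. For \eqref{item: main theorem augmentation powers stable}~$\Leftrightarrow$~\eqref{item: main theorem D surjective} you compute $\operatorname{im}(D)=I^3[G]$ exactly; this is essentially the referee's direct argument recorded in \Cref{remark: alternative}, sharpened from one inclusion to an identification of the image, and your bookkeeping (extracting $1-s^{-1}$ on the left, then writing the remaining element of $I^2[G]$ as $\sum_t e_{s,t}(1-t)$ with $e_{s,t}\in I[G]$, using that $I[G]$ is a two-sided ideal) is sound. For \eqref{item: main theorem augmentation powers stable}~$\Leftrightarrow$~\eqref{item: main theorem injectivity in cohomology} you reduce everything to $H^1(G,M)\cong\operatorname{Hom}_{\mathbb{R}}(I[G]/I^2[G],M)$ for trivial $M$ and to Shapiro's lemma for the coinduced module $\operatorname{Hom}_{\mathbb{R}}(\mathbb{R}G,M)$ --- note that this coinduced module with its constant-function embedding is precisely the module $\mathbb{R}G'\supseteq\mathbb{R}$ that the paper uses in its \eqref{item: main theorem injectivity in cohomology}~$\Rightarrow$~\eqref{item: main theorem D surjective} step, except that you quote Shapiro's lemma where the paper exhibits the relevant cocycle as an explicit coboundary. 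What your version buys is modularity and conceptual transparency: condition \eqref{item: main theorem injectivity in cohomology} is revealed to be equivalent to $H^1(G,\mathbb{R})=0$, i.e.\ to $G_{\mathrm{ab}}\otimes\mathbb{R}=0$, and the unconditional identity $\operatorname{im}(D)=I^3[G]$ is slightly stronger than what the paper states. What the paper's version buys is self-containedness --- no Ext groups or Shapiro's lemma, only explicit cocycles and duals --- and the dual-map computation for $D'$ is reused nowhere else but illustrates the bimodule structure on $\mathbb{R}G'$ that motivates the construction.
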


\begin{proof}
We first prove that \eqref{item: main theorem augmentation powers stable} implies \eqref{item: main theorem injectivity in cohomology}. Let $[z]\in H^1(G,M)$ be represented by a cocycle 
$z:G\to M$, such that $\iota_*([z])=0$ in $H^1(G,N)$. That is, 
for each $g\in G$ there exists $n\in N$ such that $(\iota\circ z)(g)=(1-g)n$ for every $g\in G$. Since $I^2[G]= I[G]$ and $I^2[G]$ is spanned by the elements of the form $(1-g)(1-h)$, 
we have that for every $g\in G$ the element $1-g$ can be expressed as a finite linear combination
$1-g = \sum \alpha_i (1-g_i)(1-h_i)$. Therefore, 
\begin{align*}
(1-g)n &= \sum \alpha_i(1-g_i)(1-h_i)n= \sum \alpha_i (1-g_i)\iota(z(h_i))=0,
\end{align*}
since $\iota$ is the inclusion of the trivial module. Since $\iota$ is also injective, this means that $z(g)=0$ for any $g\in G$.

To show that \eqref{item: main theorem injectivity in cohomology} implies \eqref{item: main theorem D surjective} consider the dual map $$D': \mathbb{R}G' \to \mathbb{M}_{\vert S\vert \times\vert S\vert}(I[G])'.$$ 
For $\xi \in \mathbb{M}_{\vert S\vert \times \vert S\vert }(I[G])$ we have 
\begin{align*}
(D'\varphi) (\xi) &= \varphi\left( \sum_{s\in S} (1-s)^* (\xi d)_s \right)
= \sum_{s\in S} (1-s)\cdot\varphi \left( \sum_{t\in S} \xi_{st} (1-t) \right)\\
&=\sum_{s,t\in S} (1-s)\cdot \varphi \cdot (1-t)^* (\xi_{st}).
\end{align*}

\noindent Assume that $\varphi\in \mathbb{R}G'$ belongs to the kernel of $D'$. Since $\xi_{st}$ can be any element from $I[G]$, we have for every $s,t\in S$ that
$$(1-s)\cdot \varphi_{\text{aug}} \cdot (1-t)^*=0,$$ 
where $\varphi_{\text{aug}}:G\to\mathbb{R}$, sends $g$ to $\varphi(1-g)$. Let $\psi_s=(1-s)\cdot\varphi_{\text{aug}} \in\mathbb{R}G'$. The condition 
$$\psi_s\cdot(1-t)^*=0,$$
 implies that for every $s\in S$ the element $\psi_s\in \mathbb{R}G'$ is a constant function on $G$.

Now observe that the map $\psi: G\to \mathbb{R}G'$ defined by $g\mapsto \psi_g=\varphi_{\text{aug}}-g\cdot \varphi_{\text{aug}}$, is a  1-coboundary for the left $G$-module 
$\mathbb{R}G'$.
It follows by the cocycle property that $\psi$ is in fact a cocycle $G\to \mathbb{R}$. By assumption, the map $\iota_*:H^1(G,\mathbb{R})\to H^1(G,\mathbb{R}G')$, induced
by the inclusion of coefficients $\mathbb{R}\subseteq \mathbb{R}G'$ as constant functions, is injective. 
Therefore, since the cocycle $\psi:G\to \mathbb{R}G'$ is a coboundary, it is also a coboundary as a 
cocycle $\psi:G\to \mathbb{R}$.
This however means that $\psi$ is identically zero, as the action of $G$ on $\mathbb{R}$ is trivial. 
Thus for every $s\in S$ we obtain
$$
\psi_s=\varphi_{\text{aug}}-s\cdot\varphi_{\text{aug}} =0,
$$
and consequently, $\varphi_{\text{aug}}\in \mathbb{R}G'$ is a constant function on $G$. It is straightforward that $\varphi$ has to be constant as well.

It follows that the map $D':I[G]'\to \mathbb{M}_{\vert S\vert \times\vert S\vert}(I[G])'$ is injective. Indeed, assume the contrary, then there exist two classes $[\varphi_1]\neq [\varphi_2]$ whose difference maps to 0 under $D'$. Choose any two representatives $\varphi_1$ and $\varphi_2$ of these classes, then the difference
$\varphi_1-\varphi_2$ is not a constant function and is in the kernel of the map $D': \mathbb{R}G'\to \mathbb{M}_{\vert S\vert \times \vert S\vert }(I[G])'$.

Finally, to derive \eqref{item: main theorem augmentation powers stable} from \eqref{item: main theorem D surjective}. Let $\xi\in I[G]$ and observe that since $\xi=d^*md$ for some $m\in \mathbb{M}_{\vert S\vert \times\vert S\vert}(I[G])$,
we have 
$$\xi = \sum_{s,t\in S} (1-s)^* m_{s,t} (1-t).$$
Since $m_{s,t}(1-t)\in I[G]$ we see that this implies $\xi$ is in fact an element of $I^2[G]$, giving the inclusion $I[G]\subseteq I^2[G]$. 
\end{proof}
 
Examples for which the conditions 
\eqref{item: main theorem augmentation powers stable}-\eqref{item: main theorem D surjective} of Theorem \ref{theorem: D surjective}
are not satisfied (that is, $G$ has infinite abelianization) can be given using results of Chen \cite{chen1,chen2} and include 
torsion-free nilpotent groups. The particular example of the Heisenberg group was also considered by Ozawa in \cite{ozawa2}.

Augmentation quotients $I^n[G]/I^{n+1}[G]$ in the context of group rings over general rings were also studied by Stallings \cite{stallings} and Quillen \cite{quillen}, 
who also considered their direct sum as a graded ring associated to the group ring.
In the situation we consider here the augmentation quotients and the associated graded ring are trivial. 
Augmentation powers $I^n[G]$ are also used to define dimension subgroups $D_n(G)=\left\{g\in G: 1-g \in I^n[G]\right\}$.
The identification of the dimension subgroups is a classical problem.

\begin{remark}\normalfont\label{remark: alternative}
As pointed out by the referee, the implication \eqref{item: main theorem augmentation powers stable} $\Longrightarrow$ \eqref{item: main theorem D surjective}
can be proven directly. 
Suppose $I[G]=I^2[G]$. Then $D$ is surjective.
Pick $\xi\in I[G]$. Since $I[G]=I^2[G]$, we have $I[G]=I^3[G]$ as well. Thus, we can express $\xi$ as the following finite sum:
	\begin{align*}
		\xi=\sum_{g,h,k}\lambda_{g,h,k}(1-g)(1-h)(1-k).
	\end{align*}
	As the augmentation ideal is generated by $1-s$, $s\in S$ as a left, as well as a right $\mathbb{R}G$-module, we can further write:
	\begin{align*}
		\xi=\sum_{g,h,k}\lambda_{g,h,k}\sum_{s\in S}(1-s)^*\alpha_{g,s}(1-h)\sum_{t\in S}\beta_{k,t}(1-t)
		=\sum_{s,t\in S}(1-s)^*\xi_{s,t}(1-t),
	\end{align*}
	where $\xi_{s,t}=\sum_{g,h,k}\lambda_{g,h,k}\alpha_{g,s}(1-h)\beta_{k,t}\in I[G]$. The observation that $$
	D\left(\left[\xi_{s,t}\right]_{s,t\in S}\right)=\xi
	$$ concludes then the proof.
\end{remark}

\section{Proof of \Cref{theorem:main} and final remarks}
We are now in the position to prove \Cref{theorem:main} . Recall that we have a group $G$ possessing finite abelinization and $n\geq 1$. Recall also, as noted at the beginning of \cref{section:D_surjectivity}, that the assumption that $G$ has finite abelianization is equivalent to stabilization of augmentation powers.
\begin{proof}[Proof of \Cref{theorem:main}]
	We prove the assertion by a simple induction. By \cite[Lemma 2]{ozawa1}, we know that $\square_1=\Delta$ is an order unit in $I[G]=I^2[G]$. It follows by \Cref{lemma:diag_ou_general} that $\text{diag}(\square_1)$ is an order unit in $\mathbb{M}_{|S|\times |S|}(I^2[G])$. Since $G$ has finite abelianization, we conclude that $\text{diag}(\square_1)$ is an order unit in $\mathbb{M}_{|S|\times |S|}(I[G])$. This proves the case $n=1$.
	
	Suppose the statement of \Cref{theorem:main} holds for some $n\geq 1$. Take any $\eta\in I[G]$ such that $\eta=\eta^*$. It follows by the surjectivity of $D$ that there exists $\xi\in\mathbb{M}_{|S|\times |S|}(I[G])$ such that $\eta=d^*\xi d$. Since $\eta=\eta^*$, we have
	\begin{align*}
		\eta = d^*\left(\dfrac{\xi+\xi^*}{2}\right)d + d^*\left(\dfrac{\xi-\xi^*}{2}\right)d= d^*\left(\dfrac{\xi+\xi^*}{2}\right)d.
	\end{align*}
By the inductive assumption, the matrix $\text{diag}(\square_n)$ is an order unit in $\mathbb{M}_{|S|\times |S|}(I[G])$. There exists therefore some $\lambda\geq0$ such that
$$
\dfrac{\xi+\xi^*}{2}+\lambda \cdot\text{diag}(\square_n) \in \mathbb{M}_{|S|\times |S|}(I[G])\cap\Sigma^2\mathbb{M}_{|S|\times |S|}(\mathbb{R}G).
$$
Since composing with $d^*$ on the left and $d$ on the right preserves the property of being a sum of hermitian squares, we obtain 
$$d^*\left(\dfrac{\xi+\xi^*}{2}\right)d+\lambda \cdot d^*\text{diag}(\square_n)d=\eta+ \lambda\cdot \square_{n+1} \in I[G]\cap\Sigma^2\mathbb{R}G.$$
\end{proof}

\begin{remark}\normalfont
	We can in fact extend \Cref{theorem:main} and provide an alternative proof that $\Delta$ is an order unit in $I[G]$ for groups with finite abelianiaztion. 
	
	This can be done as follows. Note that the map $D$ is surjective when extended to $\mathbb{M}_{|S|\times|S|}(\mathbb{R}G)$. On the other hand, the identity matrix $I$ is an order unit in $\mathbb{M}_{|S|\times|S|}(\mathbb{R}G)$, as noted in \cref{section:theorem_statement}. Thus, the proof of \Cref{theorem:main} would apply as well for that case. Indeed, for some $\lambda\geq 0$,
	$$
	\dfrac{\xi+\xi^*}{2}+\lambda I \in \Sigma^2\mathbb{M}_{\vert S\vert \times\vert S\vert}(\mathbb{R}G),
	$$
	and by the surjectivity of $D$, we get
	$$d^*\left(\dfrac{\xi+\xi^*}{2}\right)d+\lambda d^*d=\eta+ \lambda \Delta\in I[G]\cap\Sigma^2\mathbb{R}G.$$

	Even though this requires an additional assumption of $D$ being surjective (that is $G$ possessing finite abelianization), it provides 
	a conceptual explanation of this role of the Laplacian, showing that $\Delta$ is in that case the image of the identity 
	element in another ring under a positive map.
\end{remark}
\begin{remark}\normalfont
The argument from \Cref{remark: alternative} can be easily generalized to show that the map $D:\mathbb{M}_{\vert S\vert \times\vert S\vert}(I^{2n}[G])\rightarrow I^{2n+2}[G]$ is surjective for any $n\geq 1$, regardless of the property of $G$ possessing a finite abelianization. On the other hand, $\text{diag}(u)$ is an order unit in $\mathbb{M}_{\vert S\vert \times\vert S\vert}(I^{2n}[G])$, provided $u$ is an order unit in $I^{2n}[G]$, see \Cref{lemma:diag_ou_general}.Applying the same arguments as in the proof of \Cref{theorem:main} one can show then that if $u$ is an order unit in $I^{2n_0}[G]$ for some $n_0\geq 1$, then 
	$$
	u_n=\sum_{s_1,\ldots,s_n\in S}(1-s_n)^*\ldots(1-s_1)^*u(1-s_1)\ldots(1-s_n)
	$$ 
	is an order unit in $I^{2(n_0+n)}[G]$. This confirms in particular the observation of Ozawa that $\square$ is an order unit in $I^4[G]$.
\end{remark}
\begin{remark}\normalfont
In addition to $\Delta=D(I)$ and $\square=D(\text{diag}(\Delta))$, other distinguished elements of $I[G]$ have an explicit preimage in $\mathbb{M}_{\cardS\times \cardS}(\mathbb{R}G)$.

\begin{enumerate}
\item $\Delta^2=d^*dd^*d= D(dd^*)$, where $dd^*$ is the matrix with $(s,t)$-entry given by $(1-s)(1-t)^*$;
\item $\operatorname{Sq}=D (\widetilde{\operatorname{Sq}})$, where $\widetilde{\operatorname{Sq}}$ is the diagonal 
matrix with $(1-s)(1-s)^*$ in the $(s,s)$-entry and 0 otherwise.
\end{enumerate}

\end{remark}

\begin{bibdiv}
\begin{biblist}

\bib{kaluba-mizerka-nowak}{article}{
	author={Kaluba, M.},
	author={Mizerka, P.},
	author={Nowak, P. W.},
	title={Spectral gap for the cohomological Laplacian of $\operatorname{SL}_3(\mathbb{Z})$},
	journal={arXiv:2207.02783},
	date={2022},
}

\bib{bader-nowak}{article}{
   author={Bader, U.},
      author={Nowak, P. W.},
      title={Group algebra criteria for vanishing of cohomology},
   journal={Journal of Functional Analysis},
   volume={279},
   number={11},
   date={2020},
}

\bib{chen1}{article}{
	author={Chen, Kuo Tsai},
	title={Iterated integrals, fundamental groups and covering spaces},
	journal={Trans. Amer. Math. Soc.},
	volume={206},
	date={1975},
	pages={83--98},
	issn={0002-9947},
	review={\MR{377960}},
	doi={10.2307/1997148},
}

\bib{chen2}{article}{
	author={Chen, Kuo Tsai},
	title={Iterated path integrals},
	journal={Bull. Amer. Math. Soc.},
	volume={83},
	date={1977},
	number={5},
	pages={831--879},
	issn={0002-9904},
	review={\MR{454968}},
	doi={10.1090/S0002-9904-1977-14320-6},
}

\bib{kkn}{article}{
   author={Kaluba, M.},
   author={Kielak, D.},
   author={Nowak, P. W.},
   title={On property (T) for $\operatorname{Aut}(F_n)$ and $\operatorname{SL}_n(\Bbb
   {Z})$},
   journal={Ann. of Math. (2)},
   volume={193},
   date={2021},
   number={2},
   pages={539--562},
}

\bib{kno}{article}{
   author={Kaluba, M.},
   author={Nowak, P. W.},
   author={Ozawa, N.},
   title={${\rm Aut}(\Bbb F_5)$ has property (T)},
   journal={Math. Ann.},
   volume={375},
   date={2019},
   number={3-4},
   pages={1169--1191},
}

\bib{ozawa1}{article}{
   author={Ozawa, N.},
   title={Noncommutative real algebraic geometry of Kazhdan's property (T)},
   journal={J. Inst. Math. Jussieu},
   volume={15},
   date={2016},
   number={1},
   pages={85--90},
   issn={1474-7480},
}

\bib{ozawa2}{article}{
   author={Ozawa, N.},
   title={A substitute for Kazhdan's property (T) for universal non-lattices},
   journal={arXiv:2207.05272},
   date={2022},
 }

 \bib{quillen}{article}{
   author={Quillen, D.},
   title={On the Associated Graded Ring of a Group Ring},
   journal={Journal of Algebra},
   date={1968},
   volume={10},
   pages={411--418},
 }

\bib{stallings}{article}{
	author={Stallings, J. R.},
	title={Quotients of the powers of the augmentation ideal in a group ring},
	journal={From the book Knots, Groups and 3-Manifolds (AM-84)},
	series={Annals of Mathematics Studies},
	volume={84},
	date={1975},
	}
 \bib{schmidt_ou}{article}{
 	author={Schmidt, Brian K.},
 	title={Quotients of the augmentation ideal of a group ring by powers of
 		itself},
 	journal={Illinois J. Math.},
 	volume={19},
 	date={1975},
 	pages={18--26},
 	issn={0019-2082},
 	review={\MR{0377913}},
 }
\end{biblist}
\end{bibdiv}

\end{document}